\tikzstyle{vertex}=[circle, draw, inner sep=0pt, minimum size=6pt]
\newtheorem{thm}{Theorem}[section]
\newtheorem{cor}[thm]{Corollary}
\newtheorem{prop}[thm]{Proposition}
\numberwithin{equation}{section}
\theoremstyle{definition}
\theoremstyle{definition}
\newtheorem{example}[thm]{Example}
\theoremstyle{remark}
\newtheorem{remark}[thm]{\bf Remark}
\newcommand{\s}{\mathbb{S}}
\newcommand{\Cov}{\textrm{Cov}}
\def\E{\mathbb{E}}
\begin{document}

\title{\Large\textbf{Generalized Fr\'echet Bounds for Cell Entries in Multidimensional Contingency Tables}}

\author{{Caroline Uhler}\,\thanks{
Laboratory for Information and Decision Systems, Department of Electrical Engineering and Computer Science, Institute for Data, Systems and Society, Massachusetts Institute of Technology, Cambridge, MA 02139, U.S.A. E-mail address: \href{mailto:cuhler@mit.edu}{cuhler@mit.edu}.
\endgraf
\ $^\dag$Department of Statistics, Pennsylvania State University, University Park, PA 16802, U.S.A. E-mail address: \href{mailto:richards@stat.psu.edu}{richards@stat.psu.edu}.
\endgraf
\ {\it MSC 2010 subject classifications}: Primary 06D05, 62H17; Secondary 05A20, 62J12.   
\endgraf
\ {\it Keywords and phrases}.  Contingency table; FKG inequality; Fr\'echet bounds; log-supermodular function; total positivity.
\endgraf
}\ \ and {Donald Richards}\,$^\dag$
 \endgraf
}

\date{\today}

\maketitle

\begin{abstract}
We consider the lattice, $\mathcal{L}$, of all subsets of a multidimensional contingency table and establish the properties of monotonicity and supermodularity for the marginalization function, $n(\cdot)$, on $\mathcal{L}$.  We derive from the supermodularity of $n(\cdot)$ some generalized Fr\'echet inequalities complementing and extending inequalities of Dobra and Fienberg.  Further, we construct new monotonic and supermodular functions from $n(\cdot)$, and we remark on the connection between supermodularity and some correlation inequalities for probability distributions on lattices.  We also apply an inequality of Ky Fan to derive a new approach to Fr\'echet inequalities for multidimensional contingency tables.
\end{abstract}

\section{Introduction}\label{sec:introduction}

In the statistical analysis of contingency tables, the derivation of upper and lower bounds for cell entries has been accorded considerable attention.  The motivation for this problem stems from a broad range of areas, including statistical inference in the social and biomedical sciences, ecology, computer-aided tomography, causal analysis, graphical models, survey sampling, privacy and disclosure limitation, observational studies, and other fields.  We refer to Dobra \cite{dobrathesis}, Dobra and Fienberg \cite{dobra2000bounds}, and Fienberg \cite{fienberg1999frechet} for detailed accounts of results in this area and numerous references to the literature.  

The motivation for this paper stems from the work of Dobra and Fienberg \cite{dobra2000bounds}, who derived for the cell entries of multidimensional contingency tables a class of generalizations of the classical inequalities of Fr\'echet \cite{frechet1940probabilites}.  We were especially intrigued by the possibility of developing an approach to these inequalities complementing the graph-theoretic treatment given in \cite{dobra2000bounds}.  

In deriving our results, we also apply an inequality of Ky Fan \cite{fan1968inequality} which seems to have been overlooked hitherto within the literature on supermodularity.  We deduce from Fan's inequality some known Fr\'echet and Boole inequalities, derived by Fienberg~\cite{fienberg1999frechet} for multiway contingency tables, and we apply Fan's inequalities to derive Fr\'echet inequalities for general multiway contingency tables free of graphical restrictions arising from loglinear models.  We also investigate the limitations of Fan's inequality by showing that, in at least one instance, the inequality provides a bound which is weaker than a coresponding bound which we obtain from the results of Dobra and Fienberg \cite{dobra2000bounds}.  

Our results are as follows.  We establish in Section \ref{sec:generalizedfrechet} the monotonicity and supermodularity of the marginalization function, $n(\cdot)$, of a multidimensional contingency table.  We deduce Fr\'echet inequalities from the supermodularity of $n(\cdot)$, develop new monotonic and supermodular functions from $n(\cdot)$, and remark on the connection between supermodularity and correlation inequalities for certain probability distributions on contingency tables.  In Section \ref{sec:fan}, we apply Fan's inequality, thereby obtaining a new approach to deriving Fr\'echet inequalities for multidimensional contingency tables.  In Section \ref{sec:discussion}, we remark on a general procedure for interpreting classes of correlation inequalities for log-supermodular probability density functions as Fr\'echet inequalities.

\section{Generalized Fr\'echet bounds}\label{sec:generalizedfrechet}

Let $L=\{1,\dots, \ell\}$ be an index set, and denote by $\mathcal{L}$ the set of all subsets of $L$.  Then $\mathcal{L}$ is partially ordered by set-theoretic inclusion $\subset$ and forms a complete finite distributive lattice, where the meet $\wedge$ and join $\vee$ operations coincide with the set-theoretic operations of intersection $\cap$ and union $\cup$, respectively. 

Let $X_1,\ldots,X_\ell$ be discrete random variables.  We suppose that each $X_j$ takes values $x_j \in I_j$, a discrete set of labels, $j=1,\ldots,\ell$.  We define the discrete random vector $X = (X_1,\dots X_\ell)$, whose values are $x = (x_1,\ldots,x_\ell) \in J_L = I_1\times \cdots \times I_\ell$.  

Consider an {\it $\ell$-way contingency table} $n:=(n_x: x\in J_L)$. For each collection of labels $a=\{i_1,\ldots,i_p\} \subset L$, let $J_a:=I_{i_1}\times \cdots \times I_{i_p}$; then each $x \in J_L$ can be written in the form $x = (x_a,x_{L\setminus a})$.  Define the \emph{marginalization function}, 
\begin{equation}
\label{eq_marginalization_function}
n_{x(a),+} :=\sum_{x_{L\setminus a}\in J_{L\setminus a}} n_{x_a,\, x_{L\setminus a}},
\end{equation}
$a \subset L$.  Consequently, it can be seen that each contingency table $n$ defines a marginalization function that takes input $a \subset L$ and outputs a \emph{marginal table} $n(a) := (n_{x(a),+} : x(a)\in J_a)$. In this way, we can identify the marginalization function with the contingency table.  

As an example, consider the 2-way contingency table in Table \ref{leadstudy}, arising in a well-known study~\cite{morton1982lead}, \cite[p.~81~ff.]{rosenbaum2002observational} of the presence of lead in the blood of children of employees in an industrial factory in Oklahoma which used lead in the manufacture of batteries.
%
\begin{table}[!t]
\caption{Number of children classified by father's hygiene and by father's exposure to lead.}
\medskip
\label{leadstudy}
\centering
\qquad\quad \begin{tabular}{|ll|ccc|}
\hline
&& \multicolumn{3}{|c|}{{Father's exposure}} \\
&& Low & Medium & High \\
\hline
\multirow{3}{*}{{Father's hygiene}} & Poor & 7 & 5 & 13 \\
\!\!\!\!\!\!\!\!\!\!\!\!\!\!\!\!\!\!\!\! $n = $ &Medium & 1 & 1 & \phantom{1}3 \\
&Good   & 0 & 1 & \phantom{1}3 \\
\hline
\end{tabular}
\end{table}
In this example, $\ell=2$, $I_1 = \{\textrm{Poor}, \textrm{Medium}, \textrm{Good}\}$, the levels of father's hygiene, and $I_2 = \{\textrm{Low}, \textrm{Medium}, \textrm{High}\}$, the levels of father's exposure. Then $n(\{1,2\})=n$ denotes the contingency table itself, $n_{i,j}$ denotes the $(i,j)$-th entry of the contingency table corresponding to $x_1=i$ and $x_2=j$, and $n(\emptyset) = 34$ is the total number of individuals in the table. Also, the marginal tables are the row sums, $n(\{1\}) = (25,5,4)$, and the column sums, $n(\{2\}) = (8,7,19)$. 

In order to simplify the marginalization notation, we may write, for example, $n_{+,j}$ instead of $n(\{2\})_j$.  Then, 
according to the \emph{simple Fr\'echet bound} \cite{frechet1940probabilites}, each cell entry in a 2-way table is bounded by the 1-way marginals in the following way:
$$
\min\big(n(\{1\})_{i}, n(\{2\})_{j}\big) \,\geq\, n(\{1,2\})_{i,j} \,\geq\, \max\big(n(\{1\})_{i}+ n(\{2\})_{j} - n({\emptyset}), 0\big),
$$
for all $(i,j) \in I_1 \times I_2$.  In the simpler notation, this statement is equivalent to 
$$
\min(n_{i,+}, n_{+,j}) \,\geq\, n_{i,j} \,\geq\, \max(n_{i,+}+ n_{+,j} - n_{++}, 0),
$$
for all $(i,j) \in I_1 \times I_2$. Whenever a statement holds for all choices of indices, we will omit the indices; then the simple Fr\'echet bounds are given by
\begin{equation}
\label{eq_simple_Frechet}
\min\big(n(\{1\}), n(\{2\})\big) \,\geq\, n(\{1,2\}) \,\geq\, \max\big(n(\{1\})+ n(\{2\}) - n({\emptyset}), 0\big).
\end{equation}

In the following result, we generalize these bounds to multiway conditional tables. Namely, we prove that the marginalization function $n$ is decreasing and supermodular.

\begin{thm}\label{thm_supermod}
The marginalization function $n$ has the following properties:

\noindent
(a) $n$ is decreasing on $\mathcal{L}$, i.e., $n(a)\geq n(b)$ for all $a\subset b\in \mathcal{L}$, and 

\noindent
(b) $n$ is supermodular on $\mathcal{L}$, i.e., 
\begin{equation}
\label{eq_supermodular}
n(a\cup b) + n(a\cap b) \geq n(a) + n(b) 
\end{equation}
for all $a,b\in \mathcal{L}$.
\end{thm}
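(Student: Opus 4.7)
The plan is to reduce both claims to elementary statements about nonnegative sums by decomposing the index set $L$ into disjoint pieces determined by $a$ and $b$, and tracking how the marginalization function \eqref{eq_marginalization_function} interacts with that decomposition. Set $s = a \cap b$, $t = a \cup b$, $a' = a \setminus b$, $b' = b \setminus a$, and $r = L \setminus t$, so that $L = s \cup a' \cup b' \cup r$ as a disjoint union. For a fixed $x(t) = (x(s), x(a'), x(b')) \in J_t$, which by restriction also determines $x(a)$, $x(b)$, and $x(a \cap b) = x(s)$, introduce the auxiliary quantity
\[
f(i,j) \;:=\; \sum_{x(r) \in J_r} n_{x(s),\, i,\, j,\, x(r)}, \qquad i \in J_{a'},\ j \in J_{b'},
\]
which is nonnegative term-by-term.

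For part (a), the special case $a \subset b$ gives $a' = \emptyset$, $s = a$, and $b' = b \setminus a$. For any $x(b) \in J_b$ with restriction $x(a)$ to $a$, the defining sum for $n(a)_{x(a)}$ is exactly the defining sum for $n(b)_{x(b)}$ augmented by an additional summation over $x(b') \in J_{b'}$, so
\[
n(a)_{x(a)} \;=\; \sum_{x(b') \in J_{b'}} n(b)_{x(a),\, x(b')} \;\ge\; n(b)_{x(b)},
\]
since the distinguished term appears in the sum and all other summands are nonnegative.

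For part (b), a direct expansion using \eqref{eq_marginalization_function} yields the four identities
\[
n(t)_{x(t)} = f(x(a'), x(b')), \qquad n(s)_{x(s)} = \sum_{i,\,j} f(i,j),
\]
\[
n(a)_{x(a)} = \sum_{j} f(x(a'), j), \qquad n(b)_{x(b)} = \sum_{i} f(i, x(b')).
\]
Splitting each of the three sums on the right according to whether the summation index coincides with the distinguished value $x(a')$ or $x(b')$ collapses the combination
\[
n(t)_{x(t)} + n(s)_{x(s)} - n(a)_{x(a)} - n(b)_{x(b)} \;=\; \sum_{\substack{i \in J_{a'},\, i \ne x(a') \\ j \in J_{b'},\, j \ne x(b')}} f(i,j),
\]
which is a sum of nonnegative quantities; this establishes \eqref{eq_supermodular} for every choice of $x(t)$.

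The main obstacle is purely notational bookkeeping: keeping the decomposition $L = s \cup a' \cup b' \cup r$ and the associated marginal sums unambiguous, and handling the degenerate cases where one or more of the blocks $s, a', b', r$ is empty (the convention that $J_\emptyset$ is a one-point set, together with empty sums interpreted as $0$, handles these uniformly). No deeper combinatorial or probabilistic input is required; the argument is essentially a short finite inclusion--exclusion identity whose remainder term is a nonnegative sum of cell entries.
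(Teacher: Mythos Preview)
Your proof is correct and follows essentially the same inclusion--exclusion idea as the paper: decompose the index set along $a$ and $b$ and identify the remainder $n(a\cup b)+n(a\cap b)-n(a)-n(b)$ as a nonnegative sum of cell entries. The paper argues by splitting into three cases ($a$ and $b$ nested, disjoint, or overlapping), whereas your auxiliary function $f(i,j)$ together with the convention that $J_\emptyset$ is a singleton lets you treat all cases uniformly; this is a tidier packaging of the same argument.
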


\begin{proof}
Property (a) follows directly from (\ref{eq_marginalization_function}) and the non-negativity of all cell entries in a contingency table. 

To establish Property (b), the supermodularity of $n$, we consider three cases:

\noindent
(1) $a\subset b$ or $b\subset a$: Then $n(a\cup b) + n(a\cap b) = n(a) + n(b)$, so the inequality (\ref{eq_supermodular}) is valid, trivially.

\noindent
(2) $a\cap b = \emptyset$: Without loss of generality, let $a=\{i_1,\dots,i_q\}$ and $b=\{i_{q+1},\dots,i_m\}$ and let $n_{i_1^0,\dots,i_m^0,+}$ denote an arbitrary cell in the marginal contingency table $n(a\cup b)$ corresponding to $n$, where $(i_1^0, \dots , i_q^0)\in J_a$ and $(i_{q+1}^0, \dots , i_m^0)\in J_b$. Then
\begin{align*}
n(\emptyset) &\equiv \sum_{i_1,\dots, i_m} n_{i_1,\dots,i_m,+} \\
&= \sum_{i_{q+1},\dots,i_m} n_{i_1^0,\dots,i_q^0,i_{q+1},\dots,i_m,+} + \sum_{i_{1},\dots,i_q} n_{i_1,\dots,i_q,i_{q+1}^0,\dots,i_m^0,+} -  n_{i_1^0,\dots,i_m^0,+} \\
& \qquad\qquad + \sum_{(i_1,\dots,i_m)\neq (i_1^0,\dots,i_m^0)} n_{i_1,\dots,i_m,+}.
\end{align*}
Discarding the last term, we obtain 
\begin{eqnarray*}
n(\emptyset)&\geq& \sum_{i_{q+1},\dots,i_m} n_{i_1^0,\dots,i_q^0,i_{q+1},\dots,i_m,+} + \sum_{i_{1},\dots,i_q} n_{i_1,\dots,i_q,i_{q+1}^0,\dots,i_m^0,+} -  n_{i_1^0,\dots,i_m^0,+}\\
&=& n(a) + n(b) - n(a\cup b).
\end{eqnarray*}

\noindent(3) For the last case we assume without loss of generality that $a=\{i_1,\dots,i_q\}$ and $b=\{i_{p},\dots,i_m\}$ with $p\leq q$. Similar to the previous case, let $n_{i_1^0,\dots,i_m^0,+}$ denote an arbitrary cell in the marginal contingency table $n(a\cup b)$ corresponding to $n$, where $(i_1^0, \dots , i_q^0)\in J_a$ and $(i_{p}^0, \dots , i_m^0)\in J_b$. Then
\begin{eqnarray*}
n(a\cap b) &=& \sum_{i_1,\dots, i_{p-1},i_{q+1},\dots,i_m} n_{i_1,\dots,i_{p-1},i_p^0,\dots,i_q^0,i_{q+1},\dots,i_m,+}\\
&=& \sum_{i_{q+1},\dots,i_m} n_{i_1^0,\dots,i_q^0,i_{q+1},\dots,i_m,+} + \sum_{i_{1},\dots, i_{p-1}} n_{i_1,\dots,i_{p-1},i_{p}^0,\dots,i_m^0,+} -  n_{i_1^0,\dots,i_m^0,+} \\
& &\quad + \sum_{\substack{(i_1,\dots,i_{p-1})\neq(i_1^0,\dots,i_{p-1}^0), \\ (i_{q+1},\dots,i_{m})\neq(i_{q+1}^0,\dots,i_m^0)}} n_{i_1,\dots,i_{p-1},i_p^0,\dots,i_q^0,i_{q+1},\dots,i_m,+} 
\\
\phantom{n(a\cap b)} &\geq& \sum_{i_{q+1},\dots,i_m} n_{i_1^0,\dots,i_q^0,i_{q+1},\dots,i_m,+} + \sum_{i_{1},\dots,i_{p-1}} n_{i_1,\dots,i_{p-1},i_{p}^0,\dots,i_m^0,+} -  n_{i_1^0,\dots,i_m^0,+}
\end{eqnarray*}
\begin{eqnarray*}
&=& n(a) + n(b) - n(a\cup b).\phantom{\sum_{i_{1},\dots,i_{p-1}} n_{i_1,\dots,i_{p-1},i_{p}^0,\dots,i_m^0,+} -  n_{i_1^0}}
\end{eqnarray*}
This completes the proof.
\end{proof}

Note that the simple Fr\'echet inequalities (\ref{eq_simple_Frechet}) are a corollary of Theorem~\ref{thm_supermod}; namely, the first inequality is a consequence of the property that $n(\cdot)$ is decreasing and the second inequality is a consequence of supermodularity and the non-negativity of the cell entries. We now construct new supermodular functions from the marginalization function $n(\cdot)$. 

\medskip

\begin{prop}\label{prop:incr_supermodular}
The following functions are increasing and supermodular:

\noindent
(a) For $s \in\mathcal{L}$, the indicator function is defined as
$$
\mathbf{1}_s(a) := \mathbf{1}_{\{s\subset a\}} = 
\begin{cases}
1, & \hbox{ if } s\subset a, \\
0, & \hbox{ otherwise,}
\end{cases}
$$
for all $a\in\mathcal{L}$. 

\noindent
(b) The cumulative function is defined as
$$
f(a) := \sum_{s\in\mathcal{L}} \mathbf{1}_s(a) \,n(s) = \sum_{s: s\subset a} n(s), 
\qquad a\in\mathcal{L}.
$$
\end{prop}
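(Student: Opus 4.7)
The plan is to prove part (a) directly from the definitions, and then to derive part (b) by observing that $f$ is a non-negative linear combination of the indicator functions in (a), whose monotonicity and supermodularity are preserved under such combinations. The non-negativity of the coefficients $n(s)$ follows from the non-negativity of the original cell entries, which in turn is what already drove Theorem~\ref{thm_supermod}(a).

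For part (a), monotonicity of $\mathbf{1}_s$ is essentially tautological: if $a \subset a'$, then by transitivity of $\subset$, any $s$ satisfying $s \subset a$ also satisfies $s \subset a'$, so $\mathbf{1}_s(a) \leq \mathbf{1}_s(a')$. For supermodularity I would argue by cases on the pair $(\mathbf{1}_s(a), \mathbf{1}_s(b)) \in \{0,1\}^2$. When both entries are $0$, the right-hand side of (\ref{eq_supermodular}) is $0$ and the inequality is immediate. When exactly one equals $1$, say $s \subset a$, then $s \subset a \cup b$, so $\mathbf{1}_s(a\cup b) = 1$ and the left-hand side is already at least $1$. When both equal $1$, we additionally have $s \subset a \cap b$, so the inequality becomes the equality $2 = 2$. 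One point worth flagging explicitly is that $s \subset a \cup b$ does \emph{not} in general imply $s \subset a$ or $س \subset b$ (take $s = \{1,2\}$, $a = \{1\}$, $b = \{2\}$), but this direction is never required.

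For part (b), I would write $f(a) = \sum_{s \in \mathcal{L}} n(s)\,\mathbf{1}_s(a)$ to exhibit $f$ as a non-negative weighted sum of the functions in (a), with the sum understood cell-wise via the broadcasting convention inherited from Theorem~\ref{thm_supermod}. Multiplying the inequality $\mathbf{1}_s(a) \leq \mathbf{1}_s(a')$ by $n(s) \geq 0$ and summing over $s \in \mathcal{L}$ yields monotonicity of $f$; applying the same procedure to the supermodularity inequality for $\mathbf{1}_s$ yields supermodularity of $f$.

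There is really no substantial obstacle here: once Theorem~\ref{thm_supermod} is in hand, this proposition is essentially a bookkeeping exercise in two steps. The only minor subtlety to articulate carefully is the cell-wise interpretation of the sum $\sum_s n(s)$, whose summands are marginal tables of differing arities; with the broadcasting convention already used in (\ref{eq_simple_Frechet}), all inequalities are to be read cell-by-cell, and both the case analysis for $\mathbf{1}_s$ and the non-negative linear combination for $f$ respect that reading.
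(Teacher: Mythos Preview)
Your proof of part (a) matches the paper's case analysis exactly. For part (b), however, you take a different and more economical route: you observe that $f = \sum_{s \in \mathcal{L}} n(s)\,\mathbf{1}_s$ is a non-negative linear combination of the supermodular functions from (a), and that supermodularity (like monotonicity) is preserved under such combinations. The paper instead proves (b) independently of (a), by explicitly partitioning the subsets $s \subset a \cup b$ into seven classes according to how $s$ intersects $a \setminus b$, $a \cap b$, and $b \setminus a$, and then discarding two non-negative blocks of terms. Your argument is shorter, explains why (a) and (b) are packaged together, and immediately yields the paper's subsequent corollary (where $n$ is replaced by an arbitrary non-negative function $g$) with no extra work, since the only property of $n$ you invoke is $n(s) \ge 0$. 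The paper's decomposition, while more laborious, makes the combinatorics fully explicit and identifies exactly which terms are being dropped. One minor typographical slip: in your parenthetical counterexample you have a stray non-Latin character in place of $s$.
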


\begin{proof} 
(a) It is clear that $\mathbf{1}_s(a)$ is increasing. So we need to prove that 
$$
\mathbf{1}_{\{s\subset a\}} + \mathbf{1}_{\{s\subset b\}}\leq \mathbf{1}_{\{s\subset a\cup b\}} + \mathbf{1}_{\{s\subset a\cap b\}}.
$$
We analyze the inequality in three cases: First, if $s\not\subset a$ and $s\not\subset b$, then  
$$
\mathbf{1}_{\{s\subset a\}} + \mathbf{1}_{\{s\subset b\}} = 0\leq \mathbf{1}_{\{s\subset a\cup b\}} + \mathbf{1}_{\{s\subset a\cap b\}}.
$$
Second, if $s\subset a$ but $s\not\subset b$ or if  $s\subset b$ but $s\not\subset a$, then 
$$
\mathbf{1}_{\{s\subset a\}} + \mathbf{1}_{\{s\subset b\}} = 1 = \mathbf{1}_{\{s\subset a\cup b\}} + \mathbf{1}_{\{s\subset a\cap b\}}.
$$
Third, if $s\subset a$ and $s\subset b$, then 
$$
\mathbf{1}_{\{s\subset a\}} + \mathbf{1}_{\{s\subset b\}} = 2 = \mathbf{1}_{\{s\subset a\cup b\}} + \mathbf{1}_{\{s\subset a\cap b\}}.
$$

(b) It is clear that $f(a)$ is increasing. Also, to prove that 
$
f(a\cup b) + f(a\cap b) \geq f(a) + f(b)
$ 
for all $a,b \in\mathcal{L}$, we note that this inequality is equivalent to
$$
\sum_{s: \,s\subset (a\cup b)} n(s) + \sum_{s:\, s\subset (a\cap b)} n(s) \geq \sum_{s:\, s\subset a} n(s) + \sum_{s:\, s\subset b} n(s).
$$
Let $\sqcup$ denote the disjoint union. Note that
\begin{align*} 
\sum_{s:\, s\subset (a\cup b)} n(s) =& \sum_{s:\, s\subset (a\setminus b)} n(s) + \sum_{s:\, s\subset (a\cap b)} n(s) + \sum_{s:\, s\subset (b\setminus a)} n(s) + \sum_{\substack{s = s_1\sqcup s_2:\\s_1\subset (a\setminus b),\, s_2\subset (a\cap b)}} n(s)\\
&+\sum_{\substack{s = s_1\sqcup s_2:\\s_1\subset (b\setminus a),\, s_2\subset (a\cap b)}} \! n(s) + \sum_{\substack{s = s_1\sqcup s_2:\\s_1\subset (a\setminus b),\, s_2\subset (b\setminus a)}} \! n(s) +
\sum_{\substack{s = s_1\sqcup s_2\sqcup s_3:\\s_1\subset (a\setminus b),\, s_2\subset (a\cap b),\, s_3\subset (b\setminus a)}} \! n(s).
\end{align*}
Hence,
\begin{eqnarray*} 
\sum_{s:\, s\subset (a\cup b)} n(s) + \sum_{s:\, s\subset (a\cap b)} n(s)&=& \sum_{s:\, s\subset (a\setminus b)} n(s) + 2 \sum_{s:\, s\subset (a\cap b)} n(s) + \sum_{s:\, s\subset (b\setminus a)} n(s) \\
&&+ \sum_{\substack{s = s_1\sqcup s_2:\\s_1\subset (a\setminus b),\, s_2\subset (a\cap b)}} n(s) +\sum_{\substack{s = s_1\sqcup s_2:\\s_1\subset (b\setminus a),\, s_2\subset (a\cap b)}} n(s)\\ && + \sum_{\substack{s = s_1\sqcup s_2:\\s_1\subset (a\setminus b),\, s_2\subset (b\setminus a)}} n(s) +
\sum_{\substack{s = s_1\sqcup s_2\sqcup s_3:\\s_1\subset (a\setminus b),\, s_2\subset (a\cap b),\, s_3\subset (b\setminus a)}} n(s).
\end{eqnarray*}
Discarding the last two terms in the above sum, and rearranging the remaining terms, we obtain 
\begin{align*} 
\sum_{s:\, s\subset (a\cup b)} n(s) + \sum_{s:\, s\subset (a\cap b)} n(s) \ge & \sum_{s:\, s\subset (a\setminus b)} n(s) + \sum_{s:\, s\subset (a\cap b)} n(s) + \sum_{\substack{s = s_1\sqcup s_2:\\s_1\subset (a\setminus b),\, s_2\subset (a\cap b)}} n(s)\\
& +\sum_{s:\, s\subset (b\setminus a)} n(s) + \sum_{s:\, s\subset (a\cap b)} n(s) + \sum_{\substack{s = s_1\sqcup s_2:\\s_1\subset (b\setminus a),\, s_2\subset (a\cap b)}} n(s)\\
= & \sum_{s:\, s\subset a} n(s) + \sum_{s:\, s\subset b} n(s).
\end{align*}
This establishes the supermodularity of $f(a)$.  
\end{proof}

\begin{cor}\label{cor:incr_supermodular}
Let $g:\mathcal{L}\to \mathbb{R}$ be a non-negative function. Then the function 
$$
h(a) := \sum_{s\subset\mathcal{L}} \mathbf{1}_s(a) \,g(s) = \sum_{s: s\subset a} g(s),
$$
$a\in\mathcal{L}$, is increasing and supermodular.
\end{cor}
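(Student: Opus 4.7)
The plan is to observe that this corollary is essentially a carbon copy of Proposition~\ref{prop:incr_supermodular}(b): in that proof, the only property of $n(s)$ that was actually used is its non-negativity (it was used precisely at the step where the ``last two terms'' were discarded). So one route is simply to repeat the argument verbatim with $g(s)$ replacing $n(s)$.

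A cleaner route, which I would prefer, is to recognize $h$ as a non-negative linear combination of the indicator functions $\mathbf{1}_s$ studied in Proposition~\ref{prop:incr_supermodular}(a). Concretely, I would argue in three short steps. First, by Proposition~\ref{prop:incr_supermodular}(a), for each $s\in\mathcal{L}$ the indicator $\mathbf{1}_s:\mathcal{L}\to\{0,1\}$ is increasing and supermodular. Second, both properties are preserved under non-negative linear combinations: if $\varphi_1,\varphi_2$ are increasing (respectively, supermodular) and $c_1,c_2\ge 0$, then $c_1\varphi_1+c_2\varphi_2$ is increasing (respectively, supermodular), since the supermodularity inequality is additive in the function and survives multiplication by a non-negative scalar. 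Third, by the hypothesis $g(s)\ge 0$ for all $s\in\mathcal{L}$, the function
\[
h(a) \,=\, \sum_{s\in\mathcal{L}} g(s)\,\mathbf{1}_s(a)
\]
is exactly a non-negative linear combination of the $\mathbf{1}_s$, and therefore inherits both properties.

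There is no real obstacle here; the only point worth verifying carefully is the second step, i.e., that the set of increasing supermodular functions on $\mathcal{L}$ forms a convex cone. Since this is immediate from the definitions, the corollary follows in a single line once Proposition~\ref{prop:incr_supermodular}(a) is invoked.
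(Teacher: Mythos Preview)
Your proposal is correct. The first route you sketch---observing that the proof of Proposition~\ref{prop:incr_supermodular}(b) only used the non-negativity of $n(s)$ when discarding the last two terms, so the same argument applies verbatim with $g$ in place of $n$---is exactly the paper's own proof.

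Your preferred second route, via Proposition~\ref{prop:incr_supermodular}(a) and the observation that increasing supermodular functions form a convex cone, is a genuinely different and cleaner argument. The paper's approach has the minor awkwardness of pointing back into the \emph{proof} of part~(b) rather than invoking a clean statement; your approach avoids this by using part~(a) as a black box together with a general closure property. On the other hand, the paper's approach makes explicit which inequality in the expansion is responsible for the non-negativity hypothesis, which is informative if one wants to understand when strict supermodularity holds. Either way the result is immediate.
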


\begin{proof}
This is a consequence of Proposition \ref{prop:incr_supermodular}(b) because, in proving that result, we only used the property that
$$
\sum_{\substack{s = s_1\sqcup s_2:\\s_1\subset (a\setminus b),\, s_2\subset (b\setminus a)}} n(s) \;+
\sum_{\substack{s = s_1\sqcup s_2\sqcup s_3:\\s_1\subset (a\setminus b),\, s_2\subset (a\cap b),\, s_3\subset (b\setminus a)}} n(s) \;\geq\; 0,
$$
which completes the proof.
\end{proof}

\bigskip

\begin{remark}\label{correlationinequalities}
The supermodularity property can be applied to construct log-super\-modular probability distributions and to derive correlation inequalities for those distributions:  Let $\Theta \subset (\mathbb{R}_{\geq 0})^d$ be a parameter space, and define an exponential family probability distribution parametrized by $\theta\in\Theta$ on the lattice $\mathcal{L}$ with probability density function, 
\begin{equation}
\label{exp_measure}
\mu_{\theta}(a) = \exp\big(\theta^T n(a) - c(\theta)\big),
\end{equation}
$a \in \mathcal{L}$, where $\exp(-c(\theta))$ is the normalizing constant.  Then the probability distribution $\mu_{\theta}$ is \emph{log-supermodular}, i.e.,
$$
\mu_{\theta}(a \cup b) \mu_{\theta}(a \cap b) \ge \mu_{\theta}(a) \mu_{\theta}(b)
$$
for all $a, b \in \mathcal{L}$.  Log-supermodular distributions are tightly connected to distributions that are \emph{multivariate totally positive of order $2$} (MTP$_2$) \cite{karlin1980classes} (also known as \emph{FKG} \cite{fortuin1971correlation}); namely, a distribution $\mu_{\theta}$ on a lattice $\mathcal{L}$ is MTP$_2$ if 
$$\mu_{\theta}(a \cup b) + \mu_{\theta}(a \cap b) \ge \mu_{\theta}(a) + \mu_{\theta}(b)$$
for all $a, b \in \mathcal{L}$. Hence for strictly positive distributions, log-supermodularity and MTP$_2$ are equivalent. 
Note that the MTP$_2$ property depends on the labeling of the points in the lattice. For example, to check if the sample distribution in Table~\ref{leadstudy} is MTP$_2$, we need to check
$$n_{i,j} + n_{k,l} \leq n_{\min(i,k), \min(j,l)} + n_{\max(i,k), \max(j,l)}$$
for all $i,j,k,l\in\{1,2,3\}$. This leads to eight non-trivial inequalities. When encoding the father's hygiene (poor, medium, low) by (1,2,3) and the father's exposure (low, medium, high) by (1,2,3), then one can check that the sample distribution is not MTP$_2$, since for example
$$n_{2,1} + n_{1,3} \nleq n_{1,1} + n_{2,3}.$$
However, if we encode the father's hygiene (poor, medium, low) by (3,2,1) and the father's exposure (low, medium, high) by (2,1,3), then one can easily check that all eight non-trivial inequalities are satisfied and that the distribution is MTP$_2$. Distributions that are MTP$_2$ up to a relabeling of the states were studied in~\cite{non_negative_tensors}.

Note that the FKG inequality  \cite{fortuin1971correlation} can be used to obtain interesting correlation inequalities on contingency tables: Let $h_1$ and $h_2$ be decreasing functions on the lattice $\mathcal{L}$; then, by the FKG inequality, the covariance, 
$
\Cov(h_1,h_2) := \E(h_1 h_2) - \E(h_1) E(h_2)
$ 
is nonnegative. For example, with $h_1(a) = n(a \cap \alpha)$ and $h_2(a) = n(a \cap \beta)$, $a \in \mathcal{L}$, it follows from the FKG inequality that the cell entries in the marginal table of $a \cap \alpha$ are positively correlated with the cell entries in the marginal table of $a \cap \beta$.  

One can also construct more general log-supermodular probability distributions and derive correlation inequalities for those models, as was done in \cite{rosenbaum2002observational}. For example, let
\begin{equation}
\label{exp_measure-interaction}
\mu_{\theta}(a) = \exp\big(\theta_1^T n(a) + \theta_2^T n(a \cap \alpha) - c(\theta)\big),
\end{equation}
with parameter $\theta:=(\theta_1, \theta_2)\in(\mathbb{R}_{\geq 0})^d$, where $a,\alpha\in \mathcal{L}$. This log-supermodular density function is related to exponential family models arising in observational studies \cite[Chapter 4]{rosenbaum2002observational} and to Ising and Potts models arising in graphical models \cite[Subsection 3.3]{wainwright2008graphical}. 
\end{remark}

\section{Applications of an inequality by Ky Fan}\label{sec:fan}

Fan~\cite{fan1968inequality} derived a remarkable inequality for supermodular functions. We will show that many known bounds on the cell entries of a multidimensional contingency table follow from Fan's inequality.  We will also derive new inequalities from Fan's inequality and, further, we will discuss an example of bounds on the cell entries that do not follow from Fan's inequality.

\begin{thm}[Fan~\cite{fan1968inequality}]
\label{thm_Ky_Fan}
Suppose that $f$ is a supermodular function defined on a distributive lattice $\mathcal{L}$.  Then for any finite sequence $x_1,\dots,x_q$ of elements in $\mathcal{L}$, we have
\begin{equation}
\label{eq:fan1}
\sum_{1\leq i_1<\cdots <i_p \leq q} f(x_{i_1}\wedge \cdots \wedge x_{i_p}) \quad\leq\quad \sum_{k=p}^q \binom{k-1}{p-1} \; f\Big(\bigvee_{1\leq i_1<\cdots <i_k \leq q} (x_{i_1}\wedge \cdots \wedge x_{i_k})\Big),
\end{equation}
$1 \le p \le q$, and dually,
\begin{equation}
\label{eq:fan2}
\sum_{1\leq i_1<\cdots <i_p \leq q} f(x_{i_1}\vee \cdots \vee x_{i_p}) \quad\leq\quad \sum_{k=p}^q \binom{k-1}{p-1} \; f\Big(\bigwedge_{1\leq i_1<\cdots <i_k \leq q} (x_{i_1}\vee \cdots \vee x_{i_k})\Big).
\end{equation}
\end{thm}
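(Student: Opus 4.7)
The plan is to prove \eqref{eq:fan1} by a sorting reduction to the case of a chain; \eqref{eq:fan2} then follows by applying \eqref{eq:fan1} in the order-dual lattice. First, for a chain $x_1\le x_2\le\cdots\le x_q$, every meet $x_{i_1}\wedge\cdots\wedge x_{i_p}$ collapses to $x_{i_1}$ and every join $\bigvee_{i_1<\cdots<i_k}(x_{i_1}\wedge\cdots\wedge x_{i_k})$ collapses to $x_{q-k+1}$. Summing over $p$-subsets grouped by smallest index $i$ contributes the factor $\binom{q-i}{p-1}$ on the LHS, and reindexing $i=q-k+1$ on the RHS, both sides of \eqref{eq:fan1} equal $\sum_{i=1}^{q-p+1}\binom{q-i}{p-1}f(x_i)$; equality holds on chains.

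Next, for an arbitrary tuple I define the swap $\sigma_{i,j}$ (with $i<j$) replacing $(x_i,x_j)$ by $(x_i\wedge x_j,\,x_i\vee x_j)$. I claim (a) $\sigma_{i,j}$ does not decrease the LHS of \eqref{eq:fan1} and (b) it leaves the RHS unchanged. Partition the $p$-subsets $S\subseteq[q]$ by the size of $S\cap\{i,j\}$: if $|S\cap\{i,j\}|\in\{0,2\}$ the meet is unchanged (using $(x_i\wedge x_j)\wedge(x_i\vee x_j)=x_i\wedge x_j$). For each $(p-1)$-subset $T\subseteq[q]\setminus\{i,j\}$ with $z:=\bigwedge_{\ell\in T}x_\ell$, the pair of LHS summands indexed by $T\cup\{i\}$ and $T\cup\{j\}$ transforms from $f(x_i\wedge z)+f(x_j\wedge z)$ to $f((x_i\wedge x_j)\wedge z)+f((x_i\vee x_j)\wedge z)$; distributivity yields $(x_i\wedge z)\vee(x_j\wedge z)=(x_i\vee x_j)\wedge z$ and $(x_i\wedge z)\wedge(x_j\wedge z)=(x_i\wedge x_j)\wedge z$, so supermodularity of $f$ gives (a). For (b), the join $M_k:=\bigvee_{|T|=k}\bigwedge_{\ell\in T}x_\ell$ is decomposed in the same way; for each $(k-1)$-subset $T'\subseteq[q]\setminus\{i,j\}$ with $z':=\bigwedge_{\ell\in T'}x_\ell$, the pair of meets indexed by $T'\cup\{i\}$ and $T'\cup\{j\}$ contributes $(x_i\vee x_j)\wedge z'$ to $M_k$ both before the swap (since $(x_i\wedge z')\vee(x_j\wedge z')=(x_i\vee x_j)\wedge z'$) and after (since $(x_i\wedge x_j)\vee(x_i\vee x_j)=x_i\vee x_j$), so $M_k$ is preserved.

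Termination of the sorting process is guaranteed because all iterates lie in the finite distributive sublattice $\mathcal{L}_0$ generated by $x_1,\ldots,x_q$; distributivity implies modularity, so $\mathcal{L}_0$ admits a rank function $r$ with $r(x\wedge y)+r(x\vee y)=r(x)+r(y)$. The potential $\Phi:=\sum_{i=1}^q i\cdot r(x_i)$ is bounded above and changes under $\sigma_{i,j}$ by $(j-i)\bigl(r(x_i)-r(x_i\wedge x_j)\bigr)$, which is strictly positive whenever $x_i\not\le x_j$. Hence after finitely many swaps the tuple becomes a chain, at which equality holds in \eqref{eq:fan1}; combined with the monotonicity from (a), this establishes \eqref{eq:fan1} for the original tuple. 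I expect the main obstacle to be step (b): the individual meets forming $M_k$ genuinely change under the swap, so one must carefully pair $(k-1)$-subsets and invoke the distributive identity $(u\wedge w)\vee(v\wedge w)=(u\vee v)\wedge w$, as well as handle the $|T\cap\{i,j\}|\in\{0,2\}$ cases correctly; I would sanity-check the accounting on small $q$ before writing out the general argument.
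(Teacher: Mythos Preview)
Your argument is correct and takes a genuinely different route from the one the paper records. The paper simply cites Fan and sketches his original double induction on $(p,q)$: first the case $p=1$ by induction on $q$, then the trivial diagonal $p=q$, and finally the reduction of the case $(p,q)$ to the cases $(p,q-1)$ and $(p-1,q-1)$. You instead reduce to the chain case via lattice-sorting swaps $\sigma_{i,j}:(x_i,x_j)\mapsto(x_i\wedge x_j,x_i\vee x_j)$, using supermodularity together with the distributive identity $(u\wedge w)\vee(v\wedge w)=(u\vee v)\wedge w$ to show that each swap can only increase the left-hand side while leaving every $M_k$ on the right invariant; the rank-potential $\Phi=\sum_i i\,r(x_i)$ on the finite sublattice generated by the $x_i$ then forces termination at a chain, where you verified equality. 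Your approach is more structural and explains transparently why equality holds precisely on chains, whereas Fan's induction is shorter but less illuminating. Two minor points worth tightening when you write it out: in step~(b) with $k=1$ the set $T'$ is empty, so the verification is just $x_i\vee x_j=(x_i\wedge x_j)\vee(x_i\vee x_j)$ with no $z'$ present; and your appeal to finiteness of $\mathcal{L}_0$ tacitly uses that the free distributive lattice on finitely many generators is finite, which is worth stating explicitly.
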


Fan's proof of the inequality (\ref{eq:fan1}) is by induction, as follows: First, the case in which $p=1$ and $q \ge 1$ is established by induction on $q$.  Next, it is noted that if $q=p$ then both sides of the inequality (\ref{eq:fan1}) are identically equal to $f(x_1\wedge \cdots \wedge x_q)$.  Finally, for $2 \le p < q$, it is shown by induction that the case $(p,q)$ follows from the cases $(p,q-1)$ and $(p-1,q-1)$.

\begin{example}
\label{example:3waytables}
This example demonstrates the use of Fan's inequality (\ref{eq:fan1}) to derive Fr\'echet bounds arising in the analysis of 3-way contingency tables.  Fienberg~\cite[Section 6]{fienberg1999frechet} provided Fr\'echet bounds based on the 1-dimensional marginals, namely, 
\begin{eqnarray*}
\min\big(n(\{1\}), n(\{2\}),n(\{3\})\big) &\geq& n(\{1,2,3\})\\
&\geq& \max\big(n(\{1\}) +  n(\{2\}) + n(\{3\})  -2 n(\emptyset), 0\big),
\end{eqnarray*}
and the bounds based on the 2-dimensional marginals, namely
\begin{align*}
\min\big(n(\{1,2\}),&n(\{1,3\}),n(\{2,3\})\big) \\
&\;\geq\; n(\{1,2,3\})\\
&\geq\; \max\big(n(\{1,2\}) + n(\{1,3\}) - n(\{1\}), n(\{1,2\}) + n(\{2,3\}) - n(\{2\}),\\
&\qquad\qquad n(\{1,3\}) + n(\{2,3\})  - n(\{3\}), 0\big).
\end{align*}

Note that the upper bounds are a consequence of the fact that $n(\cdot)$ is decreasing. The lower Fr\'echet bound based on the 1-dimensional marginals follows from Fan's inequality (\ref{eq:fan1}) by taking $p=1$, $q=3$ and $x_i=\{i\}$ for $i=1,2,3$. The lower Fr\'echet bound based on the 2-way marginals follows from Fan's inequality by taking $p=1$, $q=2$ and taking for the $x_i$'s two sets of two elements such as $x_1=\{1,2\}$ and $x_2=\{1,3\}$.
\end{example}

We now discuss certain generalized Fr\'echet inequalities described by Fienberg~\cite{fienberg1999frechet}. Fr\'echet bounds based on the 1-dimensional marginals can be found in a variety of sources (see, e.g., \cite[Equation (6)]{fienberg1999frechet} and \cite{kwerel1988frechet,walter1988marginal,ruschendorf1991bounds}) and are as follows:
$$
\min\big(n(\{1\}),\dots,n(\{\ell\})\big) \;\geq\; n(\{1,\dots,\ell\})\;\geq\; \max\left(\sum_{j=1}^\ell n(\{j\}) -(\ell-1) n(\emptyset), 0\right).
$$ 
Note that the first inequality is a consequence of the property that $n(\cdot)$ is decreasing, and the second inequality is a corollary of Fan's inequality (\ref{eq:fan1}) with $p=1$, $q=\ell$, and $x_{i} = \{i\}$, $i=1,\ldots,\ell$.  Using Fan's inequality, we now generalize these $1$-dimensional Fr\'echet bounds to any dimension $d$, where $1 \leq d \leq \ell$.

\begin{cor}\label{cor:d_frechet_bounds}
Let $n$ be an $\ell$-way contingency table and let $1\leq d\leq \ell$. Then
\begin{equation}
\begin{aligned}
\label{one_dim_Frechet}
\min\big(n(\{&j_1,\dots,j_d\}) \, :\, 1 \leq j_1 < \cdots <j_d\leq \ell\big) \\
& \;\geq\; n(\{1,\dots,\ell\}) \\
& \;\geq\; \max\left(\frac{1}{\binom{\ell-1}{d-1}}\,\sum_{1\leq j_1 <\cdots <j_d\leq \ell} n(\{j_1,\dots,j_d\}) - \Big(\frac{\binom{\ell}{d}}{\binom{\ell-1}{d-1}} - 1\Big)n(\emptyset), 0\right).
\end{aligned}
\end{equation}
\end{cor}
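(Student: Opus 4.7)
The plan is to handle the two inequalities separately. The upper bound $\min\big(n(\{j_1,\dots,j_d\})\big) \geq n(\{1,\dots,\ell\})$ is immediate: for every $d$-subset $\{j_1,\dots,j_d\} \subset L$, we have $\{j_1,\dots,j_d\} \subset \{1,\dots,\ell\}$, so Theorem \ref{thm_supermod}(a) (monotonicity of $n$) yields $n(\{j_1,\dots,j_d\}) \ge n(\{1,\dots,\ell\})$. The trivial bound $n(\{1,\dots,\ell\}) \ge 0$ disposes of the ``$\max(\,\cdot\,,0)$'' part of the lower bound, so the only real work is to establish the explicit linear inequality in the lower bound.

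For the lower bound I would apply Ky Fan's inequality \eqref{eq:fan1} with $f = n$, $q = \ell$, $p = \ell - d$, and the specific choice $x_i = L \setminus \{i\}$ for $i=1,\ldots,\ell$. Under these choices, each meet $x_{i_1} \cap \cdots \cap x_{i_{\ell-d}} = L \setminus \{i_1,\ldots,i_{\ell-d}\}$ is a $d$-element subset of $L$, and as $(i_1,\ldots,i_{\ell-d})$ runs over all $(\ell-d)$-element subsets of $\{1,\ldots,\ell\}$, the complement runs over all $d$-element subsets. Hence the left-hand side of \eqref{eq:fan1} becomes exactly $\sum_{1\leq j_1 <\cdots<j_d\leq \ell} n(\{j_1,\dots,j_d\})$.

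For the right-hand side of \eqref{eq:fan1}, I would observe that the inner meet is $x_{i_1}\cap\cdots\cap x_{i_k} = L\setminus\{i_1,\ldots,i_k\}$, a set of size $\ell - k$; taking the union over all such $k$-subsets of indices recovers all of $L$ whenever $k \le \ell-1$ (so the term is $n(\{1,\dots,\ell\})$), while for $k = \ell$ we obtain only $\emptyset$ and the term is $n(\emptyset)$. Thus the right-hand side of \eqref{eq:fan1} becomes
\[
\Big(\sum_{k=\ell-d}^{\ell-1}\binom{k-1}{\ell-d-1}\Big) n(\{1,\dots,\ell\}) + \binom{\ell-1}{\ell-d-1}\, n(\emptyset).
\]
A hockey-stick identity, $\sum_{k=\ell-d}^{\ell-1} \binom{k-1}{\ell-d-1} = \binom{\ell-1}{\ell-d} = \binom{\ell-1}{d-1}$, simplifies this to $\binom{\ell-1}{d-1} n(\{1,\dots,\ell\}) + \binom{\ell-1}{d} n(\emptyset)$. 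Solving the resulting inequality for $n(\{1,\dots,\ell\})$ and using $\binom{\ell-1}{d}/\binom{\ell-1}{d-1} = (\ell-d)/d = \binom{\ell}{d}/\binom{\ell-1}{d-1} - 1$ yields precisely the claimed lower bound.

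The main obstacle is identifying the correct $(p,q,x_i)$ configuration so that Fan's inequality \eqref{eq:fan1} matches the asymmetry of the desired conclusion (a sum over all $d$-subsets on one side versus just $n(L)$ and $n(\emptyset)$ on the other), and then verifying the combinatorial identity that collapses the range $k=\ell-d,\dots,\ell-1$ to a single clean coefficient; the remaining algebra in rearranging the inequality is routine.
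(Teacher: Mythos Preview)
Your argument is correct and arrives at exactly the same inequality as the paper, but through a different instantiation of Fan's inequality. The paper takes $p=1$, $q=\binom{\ell}{d}$, and lets the $x_i$ range over all $d$-subsets of $L$; the key computation is then that the union $\bigvee (x_{i_1}\wedge\cdots\wedge x_{i_k})$ equals $L$ precisely when $k\le\binom{\ell-1}{d-1}$ (since each element of $L$ lies in exactly $\binom{\ell-1}{d-1}$ of the $d$-subsets) and equals $\emptyset$ otherwise, so the right-hand side collapses immediately to $\binom{\ell-1}{d-1}\,n(L)+\big(\binom{\ell}{d}-\binom{\ell-1}{d-1}\big)\,n(\emptyset)$ without any binomial summation. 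Your choice $p=\ell-d$, $q=\ell$, $x_i=L\setminus\{i\}$ keeps $q$ small and makes the lattice computation trivial, at the cost of having to evaluate $\sum_{k=\ell-d}^{\ell-1}\binom{k-1}{\ell-d-1}$ via the hockey-stick identity; the paper's choice makes the binomial coefficients trivial ($\binom{k-1}{0}=1$) at the cost of a slightly more delicate set-theoretic argument. Both routes yield the identical linear inequality before the final rearrangement.

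One small edge case: your choice gives $p=\ell-d=0$ when $d=\ell$, which falls outside the hypothesis $1\le p\le q$ of Theorem~\ref{thm_Ky_Fan}. This is harmless, since for $d=\ell$ the claimed inequality reduces to $n(L)\ge n(L)$, but you should mention it.
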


\begin{proof}
The first inequality follows from the fact that $n(\cdot)$ is decreasing. 

The second inequality follows from Fan's inequality (\ref{eq:fan1}) with $p=1$ and $q = \binom{\ell}{d}$, as follows: Let $J = \{(j_1,\ldots,j_d): 1 \le j_1 < \cdots < j_d \le \ell\}$ be the set of all subsets of size $d$ chosen from $\{1,\ldots,\ell\}$.  Then, 
$$
n\left(\bigvee_{\{i_1,\ldots,i_k\} \subset J} (x_{i_1}\wedge \cdots \wedge x_{i_k})\right) \, = \, 
\begin{cases}
n(\{1,\dots,\ell\}), & \textrm{if } k \leq \binom{\ell-1}{d-1} \\
n(\emptyset), & \textrm{otherwise}.
\end{cases}
$$
Applying Fan's inequality completes the proof.
\end{proof}

We remark that the lower bound in (\ref{one_dim_Frechet}) is a generalized Fr\'echet or generalized Boole inequality; see Kwerel \cite[ Eqs.~(13) and (14)]{kwerel1988frechet}.  In Kwerel's notation, 
\begin{equation}
\label{kwerel1}
\frac{n(\{1,\dots,\ell\})}{n(\emptyset)} \equiv p_{1,\ldots,\ell},
\end{equation}
and 
\begin{equation}
\label{kwerel2}
\frac{1}{n(\emptyset)} \sum_{1 \leq j_1 < \cdots < j_d \leq \ell} n(\{j_1,\dots,j_d\}) \equiv S_d.
\end{equation}
Dividing the lower bound in (\ref{one_dim_Frechet}) by $n(\emptyset)$, we obtain 
\begin{equation}
\label{kwerel3}
\frac{n(\{1,\dots,\ell\})}{n(\emptyset)} \ge \frac{1}{\binom{\ell-1}{d-1}}\,\frac{1}{n(\emptyset)} \sum_{1\leq j_1 <\cdots <j_d\leq \ell} n(\{j_1,\dots,j_d\}) - \frac{\binom{\ell}{d}}{\binom{\ell-1}{d-1}} + 1.
\end{equation}
Noting that 
$$
\frac{\binom{\ell}{d}}{\binom{l-1}{d-1}} = \frac{l}{d},
$$
it follows from (\ref{kwerel1}) and (\ref{kwerel2}) that (\ref{kwerel3}) is equivalent to 
$$
p_{1,\ldots,\ell} \ge \frac{S_d}{\binom{\ell-1}{d-1}} - \frac{l}{d} + 1,
$$
which is an inequality stated by Kwerel.  

\medskip

We now generalize a Fr\'echet-type inequality given by Dobra and Fienberg \cite[Theorem 6]{dobra2000bounds}.  Our proof also reveals that the inequality requires no graph-theoretic hypotheses, so that it holds in general.  

\begin{thm}\label{thm:nographs}
Let $C_1,\dots,C_d \in \mathcal{L}$ with $C_1\cup\cdots\cup C_d=\{1,\dots,\ell\}$. Define $S_j = (C_1\cup\cdots\cup C_{j-1})\cap C_j$, $j=2,\ldots,d$. Then
$$
\min\big(n(C_1),\dots,n(C_d)\big) \;\geq\; n(\{1,\dots,\ell\})\;\geq\; \max\left(\sum_{i=1}^d n(C_i) -\sum_{j=2}^d n(S_j), 0\right).
$$
\end{thm}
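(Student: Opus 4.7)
The plan is to derive the upper bound as an immediate consequence of Theorem~\ref{thm_supermod}(a), and to derive the lower bound by iterating the supermodularity inequality from Theorem~\ref{thm_supermod}(b) along the nested sequence $U_j := C_1 \cup \cdots \cup C_j$, $j=1,\dots,d$.

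For the upper bound, note that since each $C_i \subset \{1,\dots,\ell\}$, the monotonicity property of $n(\cdot)$ gives $n(C_i) \geq n(\{1,\dots,\ell\})$ for every $i$, and therefore $\min(n(C_1),\dots,n(C_d)) \geq n(\{1,\dots,\ell\})$. The outer ``$\max$'' with $0$ in the lower bound is trivial because $n(\{1,\dots,\ell\})$, being a sum of nonnegative cell entries, is nonnegative; hence it suffices to prove the inequality
$$n(\{1,\dots,\ell\}) \;\ge\; \sum_{i=1}^d n(C_i) - \sum_{j=2}^d n(S_j).$$

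For this inequality, I would observe that, with the notation $U_j = C_1 \cup \cdots \cup C_j$, we have $U_{j-1} \cup C_j = U_j$ and $U_{j-1} \cap C_j = S_j$, and $U_d = \{1,\dots,\ell\}$ by hypothesis. Applying the supermodularity inequality (\ref{eq_supermodular}) from Theorem~\ref{thm_supermod}(b) to the pair $(U_{j-1}, C_j)$ yields
$$n(U_j) + n(S_j) \;\ge\; n(U_{j-1}) + n(C_j),$$
so that $n(U_j) - n(U_{j-1}) \ge n(C_j) - n(S_j)$ for each $j = 2,\dots,d$. Summing this telescoping inequality from $j=2$ to $j=d$, we get
$$n(U_d) - n(U_1) \;\ge\; \sum_{j=2}^d n(C_j) - \sum_{j=2}^d n(S_j),$$
which, upon using $U_1 = C_1$ and $U_d = \{1,\dots,\ell\}$, rearranges to the claimed lower bound.

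The argument is short and the main conceptual point is simply the choice of the nested sequence $U_1 \subset U_2 \subset \cdots \subset U_d$; no induction on $d$ is really needed beyond the telescoping step. Accordingly, I do not anticipate a substantive obstacle: once one recognizes that $S_j$ is exactly the meet $U_{j-1} \wedge C_j$ and that the joins telescope to the full set, the theorem follows from $d-1$ applications of Theorem~\ref{thm_supermod}(b) plus the nonnegativity of cell entries. The remark that no graph-theoretic hypothesis on the $C_i$ is needed is then automatic, since supermodularity of $n(\cdot)$ holds on all of $\mathcal{L}$.
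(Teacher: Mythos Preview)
Your proof is correct and takes essentially the same approach as the paper: both obtain the upper bound from monotonicity and the lower bound by repeatedly applying supermodularity to the pair $(C_1\cup\cdots\cup C_{j-1},\,C_j)$. The only cosmetic difference is that the paper phrases this iteration as an induction on $d$, whereas you write it as a telescoping sum over $j=2,\dots,d$; the content is identical.
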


\begin{proof}
The first inequality follows from the fact that $n(\cdot)$ is decreasing. 

The proof of the second inequality is by induction on $d$. For $d=2$, the claim follows from the supermodularity property:
\begin{align*}
n(\{1,\dots,\ell\}) &\;\equiv\; n(C_1\cup C_2) \;\geq\; n(C_1)+n(C_2) - n(C_1\cap C_2) \;\equiv\; \sum_{i=1}^2 n(C_i) - n(S_2).
\end{align*}
Now suppose that the claim holds for the sets $C_1,\ldots,C_{d-1}$. Then by supermodularity,
\begin{eqnarray*}
n(\{1,\dots,\ell\}) &=& n\big((C_1\cup \cdots \cup C_{d-1}) \cup C_d\big) \\
&\geq& n(C_1\cup \cdots \cup C_{d-1}) + n(C_d) - n\big((C_1\cup \cdots \cup C_{d-1}) \cap C_d\big) \\
&=& n(C_1\cup \cdots \cup C_{d-1}) + n(C_d) - n(S_d).
\end{eqnarray*}
By the inductive hypothesis,
$$
n(C_1\cup \cdots \cup C_{d-1})\quad\geq\quad \sum_{i=1}^{d-1} n(C_i) -\sum_{j=2}^{d-1} n(S_j)
$$
and hence
$$
n(\{1,\dots,\ell\})\quad\geq\quad \sum_{i=1}^d n(C_i) -\sum_{j=2}^d n(S_j),
$$
which establishes the claim.
\end{proof}

It is interesting that although the proof is by induction, the result does not appear to follow from Fan's inequality which, as we observed before, is also derived by induction.  For example, for $d=3$ Fan's inequality with $p=1$ provides 
\begin{eqnarray*}
n(C_1)+n(C_2)+n(C_3)&\leq& n(C_1\cup C_2\cup C_3) + n((C_1\cap C_2)\cup (C_1\cap C_3)\cup (C_2\cap C_3))\\
&& +\, n(C_1\cap C_2\cap C_3)\\
&=& n(C_1\cup C_2\cup C_3) + n(S_2\cup S_3) + n(S_2\cap S_3).
\end{eqnarray*}
However, by supermodularity, 
$$
n(C_1\cup C_2\cup C_3) + n(S_2\cup S_3) + n(S_2\cap S_3) \;\geq\; n(C_1\cup C_2\cup C_3) + n(S_2)+n(S_3),
$$
and hence Fan's inequality results in a bound which is weaker than the inequality derived in Theorem~\ref{thm:nographs}.

\section{Discussion}\label{sec:discussion}

These considerations lead to a general approach to constructing families of Fr\'echet-type inequalities.  Starting with $f$, a log-supermodular strictly positive density function on $\mathcal{L}$, we construct $g = \log f$, a supermodular nonnegative function and then apply Fan's inequalities to $g$ and interpret those inequalities as Fr\'echet-type inequalities.  For example, we obtain the original Fr\'echet inequalities by choosing the log-supermodular density function given in (\ref{exp_measure}). Bearing in mind the many available examples of log-supermodular density functions \cite{karlin1980classes}, this procedure leads to a variety of inequalities.  


\section*{Acknowledgments}

We thank Milan Studen\'y for his invitation to D.R. to speak at the Workshop on ``Limit Theorems and Algebraic Statistics,'' Prague Stochastics 2014, August 25--29, 2014, held at the Institute of Information Theory and Automation, Academy of Sciences of the Czech Republic, Prague, where parts of this work were first presented. We also thank the two anonymous referees for their helpful feedback.

C.U.'s research was partially supported by DARPA (W911NF-16-1-0551), NSF (DMS-1651995), ONR (N00014-17-1-2147), and a Sloan Fellowship. D.R.'s research was partially supported by the U.S.~National Science Foundation grant DMS-1309808; by a 2013--2014 sabbatical leave-of-absence at Heidelberg University; and by a Romberg Guest Professorship at the Heidelberg University Graduate School for Mathematical and Computational Methods in the Sciences, funded by German Universities Excellence Initiative grant GSC 220/2.

\bibliographystyle{abbrv}
\bibliography{generalized_Frechet}

\end{document}